\newlength{\defbaselineskip}
\newcommand{\setlinespacing}[1]%
           {\setlength{\baselineskip}{#1 \defbaselineskip}}
\numberwithin{equation}{section}
\newtheorem{thm}{Theorem}[section]
\newtheorem{prop}[thm]{Proposition}
\newtheorem{lem}[thm]{Lemma}
\newtheorem{cor}[thm]{Corollary}
\theoremstyle{definition}
\theoremstyle{remark}
\numberwithin{equation}{section}
\begin{document}

\title[Global unique continuation for the Schr\"odinger equation]
{Global unique continuation from a half space for the Schr\"odinger equation}

\author{Ihyeok Seo}

\thanks{2010 \textit{Mathematics Subject Classification.} Primary: 35B45, 35B60; Secondary: 35Q40, 42B35.}
\thanks{\textit{Key words and phrases.} Carleman estimate, Unique continuation,
Schr\"odinger equation, Fefferman-Phong class.}

\address{School of Mathematics, Korea Institute for Advanced Study, Seoul 130-722, Republic of Korea}
\email{ihseo@kias.re.kr}

\maketitle

\begin{abstract}
We obtain a global unique continuation result for the differential inequality
$|(i\partial_t+\Delta)u|\leq|V(x)u|$ in $\mathbb{R}^{n+1}$.
This is the first result on global unique continuation
for the Schr\"odinger equation with time-independent potentials $V(x)$ in $\mathbb{R}^{n}$.
Our method is based on a new type of Carleman estimates for the operator $i\partial_t+\Delta$ on $\mathbb{R}^{n+1}$.
As a corollary of the result, we also obtain a new unique continuation result for some parabolic equations.
\end{abstract}


\section{Introduction and statement of results}

In this paper we are concerned with Carleman estimates for the operator $i\partial_t+\Delta$
and their applications to global unique continuation for the Schr\"odinger equation
$i\partial_tu+\Delta u=V(x)u$ in $\mathbb{R}^{n+1}$.
More generally, we shall consider the differential inequality
\begin{equation}\label{Inequality}
|(i\partial_t+\Delta)u|\leq|V(x)u|,
\end{equation}
where $V$ is called a potential in $\mathbb{R}^n$.

Given a partial differential equation in $\mathbb{R}^n$, we say that it has
the (global) unique continuation from a non-empty open subset $\Omega\subset\mathbb{R}^n$
if its solution cannot vanish in $\Omega$ without being identically zero.
Historically, such property was studied in connection with the uniqueness of the Cauchy problem to which,
in many cases, it is equivalent.

The major method to attack unique continuation problems is based on so-called Carleman estimates
which are a type of weighted a priori estimates for the associated solutions.
The original idea goes back to Carleman~\cite{C}, who first introduced it to obtain
unique continuation for some second-order elliptic equations that need not have analytic or even smooth coefficients.
Since then, the method of Carleman estimates has played a central role in almost all subsequent developments.

Global unique continuation from a half space of $\mathbb{R}^{n+1}$ for the Schr\"odinger equation
with time-dependent potentials $V(x,t)$ has been studied by several authors (\cite{KS,S,LS,S2}).
In 1988, Kenig and Sogge ~\cite{KS} showed that solutions $u$ of the differential inequality
\begin{equation*}
|(i\partial_t+\Delta)u|\leq|V(x,t)u|
\end{equation*}
cannot vanish in a half space of $\mathbb{R}^{n+1}$
without being identically zero
if $V\in L^{\frac{n+2}2}(\mathbb{R}^{n+1})$.
This result was obtained by showing the following Carleman estimate:
\begin{equation}\label{KS-Carl}
\big\|e^{\beta\langle(x,t),\nu\rangle}u\big\|_{L^{\frac{2(n+2)}{n}}(\mathbb{R}^{n+1})}\leq
C\big\|e^{\beta\langle(x,t),\nu\rangle}(i\partial_t+\Delta)u\big\|_{L^{\frac{2(n+2)}{n+4}}(\mathbb{R}^{n+1})}
\end{equation}
with $C$, independent of $\beta\in\mathbb{R}$ and $\nu\in\mathbb{R}^{n+1}$,
whenever $u\in C_0^\infty(\mathbb{R}^{n+1})$.
Here $\langle\text{ },\text{ }\rangle$ denotes the usual inner product on $\mathbb{R}^{n+1}$.
Note that, when $\beta=0$, the estimate~\eqref{KS-Carl} is the inhomogeneous Strichartz estimate
for the Schr\"odinger equation (\cite{Str}).
In this regard, the later developments~\cite{IK,S,LS} have been made to extend ~\eqref{KS-Carl}
to mixed Lebesgue norms $L_t^qL_x^r$, with different exponents in space and time,
for which the inhomogeneous Strichartz estimate~\cite{KT,Fo,V} is known to hold.
More recently, these works were extended by the author ~\cite{S2} to more general mixed Wiener amalgam norms
$W(L^{q_1},L^{q_2})_tW(L^{r_1},L^{r_2})_x$
which in particular is the same as $L_t^qL_x^r$ when $q_1=q_2=q$ and $r_1=r_2=r$:
\begin{equation*}
\big\|e^{\beta\langle(x,t),\nu\rangle}u\big\|_{W(q_1,q_2)_tW(r_1,r_2)_x} \leq
C\big\|e^{\beta\langle(x,t),\nu\rangle}(i\partial_t+\Delta)u\big\|_
{W(\widetilde{q}_1',\widetilde{q}_2')_tW(\widetilde{r}_1',\widetilde{r}_2')_x},
\end{equation*}
where, for simplicity, we used $W(p,q)$ to denote $W(L^p,L^q)$.
Making use of such extended Carleman estimates, the above unique continuation result
of Kenig and Sogge was also improved to more general mixed spaces of potentials $V$
contained in $L_t^pL_x^s$ (\cite{S,LS}) and $W(L^{p_1},L^{p_2})_tW(L^{s_1},L^{s_2})_x$ (\cite{S2}).
There are also closely related unique continuation results (\cite{Z,B,KPV,IK,S})
for nonlinear Schr\"odinger equations of the form $(i\partial_t+\Delta)u=V(x,t)u+F(u)$,
where $F$ is a suitable nonlinear term.

In spite of these many works, there have been no results on global unique continuation
for ~\eqref{Inequality} with time-independent potentials $V(x)$.
The difficulty in this situation is that the conventional Carleman argument used to obtain unique continuation
from Carleman estimates heavily relies on H\"older's inequality.
For example, in~\eqref{KS-Carl}, one needs to control the norm of the right-hand side
by that of the other, as follows:
\begin{align*}
\big\|e^{\beta\langle(x,t),\nu\rangle}(i\partial_t+\Delta)u\big\|_{L^{\frac{2(n+2)}{n+4}}(\mathbb{R}^{n+1})}
&\leq
\big\|e^{\beta\langle(x,t),\nu\rangle}Vu\big\|_{L^{\frac{2(n+2)}{n+4}}(\mathbb{R}^{n+1})}\\
&\leq
\|V\|_{L^{\frac{n+2}2}(\mathbb{R}^{n+1})}
\big\|e^{\beta\langle(x,t),\nu\rangle}u\big\|_{L^{\frac{2(n+2)}{n}}(\mathbb{R}^{n+1})}.
\end{align*}
Note that the condition on the potential such as $V\in L^{(n+2)/2}(\mathbb{R}^{n+1})$ is determined
from using H\"older's inequality in such argument.
Hence it is not possible to obtain any unique continuation results for the time-independent case~\eqref{Inequality}
from a type of Carleman estimates used in time-dependent cases.

The main contribution of this paper is to establish the following new type of Carleman estimates
\begin{equation}\label{newtype}
\big\|e^{\beta\langle(x,t),\nu\rangle}u\big\|_{L_{t,x}^2(|V|)}\leq
C(V)\big\|e^{\beta\langle(x,t),\nu\rangle}(i\partial_t+\Delta)u\big\|_
{L_{t,x}^2(|V|^{-1})}
\end{equation}
which implies a global unique continuation result for the time-independent case~\eqref{Inequality}.
Here, $C(V)$ is a suitable constant depending on $V(x)$.
This new type makes it possible to use the Carleman argument
without the aid of H\"older's inequality. Indeed, note that
\begin{align*}
\big\|e^{\beta\langle(x,t),\nu\rangle}(i\partial_t+\Delta)u\big\|_{L_{t,x}^2(|V|^{-1})}
&\leq
\big\|e^{\beta\langle(x,t),\nu\rangle}Vu\big\|_{L_{t,x}^2(|V|^{-1})}\\
&=
\big\|e^{\beta\langle(x,t),\nu\rangle}u\big\|_{L_{t,x}^2(|V|)}.
\end{align*}

To obtain ~\eqref{newtype}, we consider the class of potentials used by Fefferman and Phong (\cite{F})
in the study of eigenvalue problems of the Schr\"odinger operator $-\Delta+V(x)$.
A function $V\in L_{\textrm{loc}}^p$, $1\leq p\leq n/2$, is said to be in
the so-called Fefferman-Phong class $\mathcal{F}^p$ if it satisfies that
$$\|V\|_{\mathcal{F}^p}=\sup_Q|Q|^{2/n}\bigg(\frac1{|Q|}\int_Q|V|^pdx\bigg)^{1/p}<\infty,$$
where the sup is taken over all cubes $Q$ in $\mathbb{R}^n$.
In particular, $L^{n/2}=\mathcal{F}^{n/2}$ and $L^{n/2,\infty}\subset \mathcal{F}^p$ for all $1\leq p<n/2$.
This class was also used in the study of unique continuation problems for the differential inequality
$|\Delta u|\leq|V(x)u|$ associated with the stationary Schr\"odinger equation (\cite{CS,W,CR,RV,RV3}).

The main result in this paper is the following Carleman estimate.

\begin{thm}\label{thm1}
Let\, $n\geq3$.
If\,\, $V\in\mathcal{F}^p$ for $p>(n-1)/2$,\,
then we have
\begin{equation}\label{Carl}
\big\|e^{\beta\langle(x,t),\nu\rangle}u\big\|_{L_{t,x}^2(|V|)} \leq
C\|V\|_{\mathcal{F}^p}\big\|e^{\beta\langle(x,t),\nu\rangle}(i\partial_t+\Delta)u\big\|_
{L_{t,x}^2(|V|^{-1})}
\end{equation}
with $C$, independent of $\beta\in\mathbb{R}$ and $\nu\in\mathbb{R}^{n+1}$,
whenever $u\in C_0^\infty(\mathbb{R}^{n+1})$.
\end{thm}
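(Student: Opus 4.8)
The plan is to prove \eqref{Carl} by a conjugation-and-duality argument that reduces it to a uniform weighted resolvent estimate for the Laplacian, thereby avoiding H\"older's inequality exactly as advertised. Write $\nu=(\nu',\nu_{n+1})\in\mathbb R^n\times\mathbb R$ and $w=e^{\beta\langle(x,t),\nu\rangle}u$, and let $P_\beta=e^{\beta\langle(x,t),\nu\rangle}(i\partial_t+\Delta)e^{-\beta\langle(x,t),\nu\rangle}$ be the conjugated operator. A direct computation gives $P_\beta=i\partial_t-i\beta\nu_{n+1}+(\nabla-\beta\nu')^2$, so $P_\beta$ is a Fourier multiplier in $(x,t)$ with the complex-shifted paraboloid symbol $-(\tau+i\beta\nu_{n+1})-(\xi+i\beta\nu')\cdot(\xi+i\beta\nu')$. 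Setting $F=e^{\beta\langle(x,t),\nu\rangle}(i\partial_t+\Delta)u=P_\beta w$ and $h=|V|^{-1/2}F$, the right side of \eqref{Carl} becomes $\|h\|_{L^2}$ and the left side becomes $\||V|^{1/2}P_\beta^{-1}(|V|^{1/2}h)\|_{L^2}$. Hence it suffices to establish the operator bound
\[
\big\| |V|^{1/2}P_\beta^{-1}|V|^{1/2}\big\|_{L^2(\mathbb R^{n+1})\to L^2(\mathbb R^{n+1})}\le C\|V\|_{\mathcal F^p}
\]
uniformly in $\beta$ and $\nu$; the two sides of \eqref{Carl} are now conjugate weighted $L^2$ norms, which is the whole point of the new formulation.

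The next step exploits that $V=V(x)$ is independent of $t$, so multiplication by $|V|^{1/2}$ commutes with the partial Fourier transform $\mathcal F_t$ in time. Conjugating $|V|^{1/2}P_\beta^{-1}|V|^{1/2}$ by $\mathcal F_t$ diagonalizes it as a direct integral over the time frequency $\tau$, and by Plancherel in $t$ its $L^2(\mathbb R^{n+1})$ operator norm equals
\[
\sup_{\tau\in\mathbb R}\big\| |V|^{1/2}\big[(\nabla-\beta\nu')^2-z\big]^{-1}|V|^{1/2}\big\|_{L^2(\mathbb R^n)\to L^2(\mathbb R^n)},\qquad z=\tau+i\beta\nu_{n+1}.
\]
Thus everything reduces to the spatial estimate
\[
\big\| |V|^{1/2}\big[(\nabla-a)^2-z\big]^{-1}|V|^{1/2}\big\|_{L^2(\mathbb R^n)\to L^2(\mathbb R^n)}\le C\|V\|_{\mathcal F^p}
\]
\emph{uniformly} over all shifts $a=\beta\nu'\in\mathbb R^n$ and all spectral parameters $z\in\mathbb C$. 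Since $(\nabla-a)^2-z=e^{a\cdot x}(\Delta-z)e^{-a\cdot x}$, this is the Helmholtz resolvent conjugated by the exponential weight $e^{a\cdot x}$ arising from the spatial part of the Carleman weight; equivalently, it is the uniform weighted Carleman estimate $\|e^{a\cdot x}v\|_{L^2(|V|)}\le C\|V\|_{\mathcal F^p}\|e^{a\cdot x}(\Delta-z)v\|_{L^2(|V|^{-1})}$ for $-\Delta$, of the type studied for $|\Delta u|\le|Vu|$ in the references cited above.

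To prove this uniform spatial estimate I would analyze the resolvent through its characteristic variety. The symbol $-(\xi+ia)\cdot(\xi+ia)-z$ vanishes on the real set $\{|\xi|^2=|a|^2-\mathrm{Re}\,z\}\cap\{2a\cdot\xi=-\mathrm{Im}\,z\}$, which is a subset of a sphere; off this sphere the symbol is elliptic and the corresponding piece is controlled, after dyadic frequency localization, by the Fefferman--Phong trace inequality $\int|V||v|^2\le C\|V\|_{\mathcal F^p}\|v\|_{\dot H^1}^2$ (valid since $p>(n-1)/2\ge1$ for $n\ge3$) and its rescalings. The concentration near the sphere is the genuine difficulty and is governed by a weighted extension estimate for spheres: one needs
\[
\Big(\int_{\mathbb R^n}\big|\widehat{g\,d\sigma_\rho}\big|^2\,|V|\,dx\Big)^{1/2}\le C\,\|V\|_{\mathcal F^p}^{1/2}\,\Big(\int_{|\xi|=\rho}|g|^2\,d\sigma_\rho\Big)^{1/2}
\]
uniformly in the radius $\rho>0$, where $d\sigma_\rho$ is surface measure on $\{|\xi|=\rho\}$. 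This is the Stein--Tomas restriction theorem transplanted to the $\mathcal F^p$-weighted setting, and it is exactly here that the hypothesis $p>(n-1)/2$ enters: $(n-1)/2$ is half the dimension of the restricting sphere $S^{n-1}$, and the Fefferman--Phong scaling $|Q|^{2/n}$ matches the decay of $\widehat{d\sigma_\rho}$ at precisely this threshold, so that the bound becomes scale invariant and hence uniform in $\rho$. The complex shift $a$ only rotates and truncates the sphere (the hyperplane $2a\cdot\xi=-\mathrm{Im}\,z$ cuts out a subsphere), so the estimate applies uniformly in $a$ and $z$ once it is known for all radii.

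The main obstacle is this final step: proving the weighted restriction/extension inequality for spheres \emph{uniformly in the radius and the shift}, with the sharp exponent $p>(n-1)/2$ and constant proportional to $\|V\|_{\mathcal F^p}$, and then summing the localized contributions over dyadic frequency scales and over the distance to the characteristic sphere while patching them to the elliptic off-sphere pieces with every constant uniform in $(a,z)$. Everything preceding it is the soft reduction via conjugation, Plancherel in $t$, and duality.
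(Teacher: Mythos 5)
Your opening reduction is sound, and it is in fact the Fourier-dual form of the paper's own first step: conjugation produces $i\partial_t+\Delta+L(D)$ with $L(D)=-2\beta\nu'\cdot\nabla+\beta^2|\nu'|^2-i\beta\nu_{n+1}$, and the paper's Proposition (inequality \eqref{homo}) is exactly the space-time statement whose partial Fourier transform in $t$ (legitimate, as you note, because $V$ is time-independent) is your uniform fiber bound for $(\nabla-\beta\nu')^2-z$, $z=\tau+i\beta\nu_{n+1}$. You also correctly identify the two hard ingredients and the role of the exponent: the weighted sphere restriction estimate with constant $C\|V\|_{\mathcal{F}^p}^{1/2}$ at the threshold $p>(n-1)/2$ (the paper's footnoted estimate, which drives Lemma~\ref{prop1}), and elliptic/Fefferman--Phong bounds away from the characteristic set. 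The paper, by contrast, never passes to the fibers: it stays in space-time precisely so that it can use the Schr\"odinger evolution estimate of Lemma~\ref{prop1} alongside the fixed-spectral-parameter resolvent bound of Lemma~\ref{lem3}.

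The genuine gap is that your proposal stops where the paper's proof actually begins, and the route you sketch for the ``final step'' would not close as described. The characteristic variety of $(\nabla-a)^2-z$ is not a sphere but (generically) the codimension-two set $\{|\xi|^2=|a|^2-\mathrm{Re}\,z\}\cap\{2a\cdot\xi=-\mathrm{Im}\,z\}$, so the inverse symbol is of size $\bigl(\bigl||\xi|^2-R^2\bigr|+|2a\cdot\xi+\mathrm{Im}\,z|\bigr)^{-1}$, singular in two transverse directions. If you decompose dyadically in the distance to the hyperplane $\{2a\cdot\xi=-\mathrm{Im}\,z\}$ and estimate each piece by a uniform-in-radius weighted restriction bound, the resulting sum over dyadic scales diverges logarithmically; the triangle inequality alone cannot recombine the pieces, and one needs almost-orthogonality of these frequency slabs in $L^2(|V|dx)$ -- which fails for a general $V\in\mathcal{F}^p$. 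This is exactly what the paper's main technical device supplies and what your outline omits: replace $|V|$ by the one-dimensional maximal weight $W=(M(|V|^\alpha))^{1/\alpha}$ in the drift direction $x_n$, which by \eqref{relation}, Lemma~\ref{lem4} and \eqref{properties} keeps the $\mathcal{F}^p$ norm comparable while making $W$ and $W^{-1}$ $A_2$ weights in $x_n$; then Kurtz's weighted Littlewood--Paley theorem gives the needed orthogonality to sum the pieces $|\xi_n|\sim2^{-k}$, and on each piece the multiplier is compared with its frozen version $i\xi_n\mapsto i2^{-k}$, the frozen part handled by the uniform-in-$z$ resolvent estimate of Lemma~\ref{lem3} and the difference by Lemma~\ref{prop1} together with the weighted Marcinkiewicz multiplier theorem and the uniformly bounded integral $\int 2^{-k}(\rho^2+2^{-2k})^{-1}d\rho$. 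Since you explicitly defer this entire mechanism (``the main obstacle is this final step''), what you have is a correct soft reduction to the theorem's actual content, not a proof of it.
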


Making use of this estimate, we obtain a global unique continuation result from a half space
of $\mathbb{R}^{n+1}$ for the time-independent case
\begin{equation}\label{Inequality2}
|(i\partial_t+\Delta)u|\leq|V(x)u|.
\end{equation}
Let $H_{t}^{1}$ denote the usual Sobolev space of functions whose derivatives up to order $1$ with respect to time variable $t$ belong to $L^2$.
Similarly for $H_{x}^{2}$.

\begin{thm}\label{thm2}
Let $n\geq3$.
Suppose that $u\in H_{t}^{1}\cap H_{x}^{2}$ be a solution of ~\eqref{Inequality2}
which vanishes in a half space of $\mathbb{R}^{n+1}$.
Then it is identically zero if
$$V\in \mathcal{F}^p,\quad(n-1)/2<p\leq n/2,$$
with $\|V\|_{\mathcal{F}^p}<1/C$, where $C$ is given in ~\eqref{Carl}.
Additionally, if $p=n/2$, we assume that there is $\delta>0$ such that
$|V(x)|\geq \delta/R^2$ for $|x|\leq2R$ and all sufficiently large $R$.
\end{thm}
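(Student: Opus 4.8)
The plan is to run the standard Carleman argument for unique continuation, but driven by the weighted estimate \eqref{Carl} rather than by a H\"older inequality, so that no integrability of $V$ beyond $V\in\mathcal F^p$ is required. Since \eqref{Carl} holds for every $\nu\in\mathbb R^{n+1}$ with a constant independent of $\nu$ and $\beta$, I first choose $\nu$ to be the inner normal to the half space in which $u$ vanishes and translate coordinates so that, writing $\phi(x,t):=\langle(x,t),\nu\rangle$, one has $u\equiv0$ on $\{\phi>0\}$ and hence $\operatorname{supp}u\subseteq\{\phi\le0\}$. I keep $\beta>0$ throughout; then $e^{\beta\phi}\le1$ on $\operatorname{supp}u$, which is the only place the weight will have to act and is exactly where the half-space hypothesis enters.

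Since \eqref{Carl} is stated for $C_0^\infty$ functions, the next step is localization. Let $\chi_R(x,t)=\chi((x,t)/R)$ be smooth, equal to $1$ on $|(x,t)|\le R$ and supported in $|(x,t)|\le2R$; after a routine mollification (legitimate because $u\in H^1_t\cap H^2_x$) I apply \eqref{Carl} to $\chi_R u$. Writing
\[(i\partial_t+\Delta)(\chi_R u)=\chi_R(i\partial_t+\Delta)u+E_R,\qquad E_R=2\nabla_x\chi_R\cdot\nabla_x u+(\Delta_x\chi_R)u+i(\partial_t\chi_R)u,\]
the differential inequality \eqref{Inequality2} controls the main term exactly as in the computation displayed just before Theorem \ref{thm1}, producing $\|e^{\beta\phi}\chi_R u\|_{L^2_{t,x}(|V|)}$ on the right-hand side. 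Because $C\|V\|_{\mathcal F^p}<1$ by hypothesis, I absorb this term into the left-hand side and obtain
\[\big\|e^{\beta\phi}\chi_R u\big\|_{L^2_{t,x}(|V|)}\ \le\ \frac{C\|V\|_{\mathcal F^p}}{1-C\|V\|_{\mathcal F^p}}\,\big\|e^{\beta\phi}E_R\big\|_{L^2_{t,x}(|V|^{-1})}.\]

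Everything then rests on showing that the commutator term tends to $0$. The function $E_R$ is supported in the shell $\{R\le|(x,t)|\le2R\}$, is pointwise $\lesssim R^{-1}(|\nabla_x u|+|u|)$ there, and lives inside $\{\phi\le0\}$, where $e^{\beta\phi}\le1$. When $p=n/2$ the extra hypothesis $|V(x)|\ge\delta/R^2$ on $|x|\le2R$ gives $|V|^{-1}\le R^2/\delta$ on the shell, whence
\[\big\|e^{\beta\phi}E_R\big\|^2_{L^2_{t,x}(|V|^{-1})}\ \lesssim\ \frac1\delta\int_{R\le|(x,t)|\le2R}\big(|\nabla_x u|^2+|u|^2\big)\ \longrightarrow\ 0\]
as $R\to\infty$, using $u,\nabla_x u\in L^2$. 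When $(n-1)/2<p<n/2$ no pointwise lower bound on $V$ is available, so here I would instead estimate $\int|E_R|^2|V|^{-1}$ over the shell by combining the scaling built into $\|V\|_{\mathcal F^p}$ with the decay of $u$ and $\nabla_x u$. Reconciling the possible smallness of $V$, which inflates $|V|^{-1}$, with the $R^{-1}$ gain coming from the cutoff is the step I expect to be the main obstacle, and it is precisely what dictates the quantitative hypotheses on $V$.

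Letting $R\to\infty$ then forces $\|e^{\beta\phi}\chi_R u\|_{L^2_{t,x}(|V|)}\to0$, so by monotone convergence $e^{\beta\phi}u=0$ for $|V|\,dx\,dt$-almost every point, i.e. $u\equiv0$ on $\{V\neq0\}$. When $p=n/2$ the bound $|V(x)|\ge\delta/R^2$, applied with $R\sim|x|$, shows $V(x)\neq0$ for every $x$, so $\{V\neq0\}=\mathbb R^{n+1}$ and $u\equiv0$ at once. When $p<n/2$, on the remaining set $\{V=0\}$ the inequality \eqref{Inequality2} degenerates to $(i\partial_t+\Delta)u=0$, and an elementary continuation argument, using that $u$ already vanishes on the adjacent region $\{V\neq0\}$ and on the half space, completes the proof.
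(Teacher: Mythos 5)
Your overall strategy --- the Carleman estimate \eqref{Carl} applied to a mollified cutoff $\chi_R u$, absorption of the potential term using $C\|V\|_{\mathcal{F}^p}<1$, and then showing the commutator term vanishes as $R\to\infty$ --- is the same skeleton as the paper's proof, and your treatment of the endpoint $p=n/2$ is essentially correct (indeed slightly more direct than the paper's: you conclude $u=0$ on $\{V\neq0\}$ at a fixed $\beta$, whereas the paper restricts the left-hand side to a slab $S_\sigma(\nu_0)=\{-\sigma<\langle(x,t),\nu_0\rangle\le0\}$, absorbs only the slab portion, lets $\beta\to\infty$ to kill the remaining term, and then inducts over slabs). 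But for $(n-1)/2<p<n/2$ your proposal has a genuine gap, which you yourself flag: you have no way to control $\int|E_R|^2|V|^{-1}$ over the shell, and no bound of the kind you hope for can exist. The class $\mathcal{F}^p$ constrains $V$ only from \emph{above}; $V$ may vanish identically on an open subset of the shell, in which case $\int|E_R|^2|V|^{-1}=+\infty$ no matter how small $\|V\|_{\mathcal{F}^p}$ is and however fast $u$ and $\nabla_x u$ decay. Likewise your proposed endgame on $\{V=0\}$ --- an ``elementary continuation argument'' for $(i\partial_t+\Delta)u=0$ from the set $\{V\neq0\}$ --- is not available: unique continuation for the free Schr\"odinger operator from an arbitrary measurable (or even open) set is itself a deep problem, and nothing in your setup provides it.

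The missing idea, which is the one genuinely new ingredient in the paper's proof of Theorem \ref{thm2}, is to modify the potential \emph{before} running the argument. Since $|x|^{-2}\in L^{n/2,\infty}\subset\mathcal{F}^p$ exactly when $p<n/2$, and since
\begin{equation*}
|(i\partial_t+\Delta)u|\le|Vu|\le\big|\big(|V|+\delta|x|^{-2}\big)u\big|,
\end{equation*}
one may replace $V$ by $|V|+\delta|x|^{-2}$: the differential inequality \eqref{Inequality2} is preserved, and for $\delta$ small enough the smallness hypothesis on the $\mathcal{F}^p$ norm still holds. The augmented potential satisfies the pointwise lower bound $|V|\ge\delta|x|^{-2}$, which is precisely what gives $|V|^{-1}\lesssim R^2/\delta$ on the shell (so your $p=n/2$ computation for $E_R$ goes through verbatim, and the mollification step is justified as well, cf.\ \eqref{note2}), and which makes the augmented potential strictly positive a.e., so that $u=0$ on $\{V\neq0\}$ already means $u\equiv0$ and the troublesome set $\{V=0\}$ never appears. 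This also explains why the extra hypothesis $|V(x)|\ge\delta/R^2$ is imposed only at $p=n/2$: there $|x|^{-2}\notin\mathcal{F}^{n/2}$ and the augmentation is unavailable, so the lower bound must be assumed.
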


Now we would like to emphasize that Theorem~\ref{thm2} with a simple argument leads to
unique continuation for parabolic equations of the form
\begin{equation}\label{parabolic}
\partial_tu-\Delta u=\widetilde{V}(x,t)u,
\end{equation}
where $\widetilde{V}$ is a function in $\mathbb{R}^{n+1}$.
Some properties of unique continuation for ~\eqref{parabolic}
have been studied by several authors (e.g.,~\cite{E,EV}).
But, our result that follows directly from Theorem~\ref{thm2} is somehow a new and global one on the subject.
More precisely, let us consider the following parabolic equation
\begin{equation}\label{para}
\partial_tu-\Delta u=(g'(t)+t+V(x))u,
\end{equation}
where $g(t)$ is a differentiable function bounded below such that
$g(t)\rightarrow\infty$ as $|t|\rightarrow\infty$,
and $V$ satisfies the same conditions in Theorem~\ref{thm2}.
Then, we have the following result.

\begin{cor}\label{cor}
Let $n\geq3$ and $D$ be a half space of $\mathbb{R}^n$.
If $u\in H_{t}^{1}\cap H_{x}^{2}$ be a solution of ~\eqref{para}
which vanishes in a half space $D\times\mathbb{R}$ of $\mathbb{R}^{n+1}$,
then it is identically zero.
\end{cor}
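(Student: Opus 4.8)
The plan is to strip off the time-dependent part of the potential with an integrating factor, thereby reducing \eqref{para} to a heat equation with the \emph{time-independent} potential $V$, and then to pass from that heat equation to the Schr\"odinger differential inequality \eqref{Inequality2} by analytically continuing in the time variable (a Wick rotation), at which point Theorem~\ref{thm2} applies directly.

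First I would set $v(x,t)=e^{-g(t)-t^2/2}u(x,t)$. Since $\frac{d}{dt}\big(g(t)+t^2/2\big)=g'(t)+t$, which is exactly the time-dependent coefficient in \eqref{para} (this is precisely why that coefficient was chosen in the form $g'(t)+t$), a one-line computation gives $\partial_tv-\Delta v=V(x)v$. Thus $v$ solves the heat equation with the time-independent potential $V$, it still vanishes on $D\times\mathbb R$, and because $g$ is bounded below with $g(t)\to\infty$ as $|t|\to\infty$ the weight $e^{-g(t)-t^2/2}$ is bounded and Gaussian-decaying, so $v$ inherits the global integrability of $u$. Next I would Wick-rotate: heat evolution is governed by an analytic semigroup, so $v(x,\cdot)$ extends to an analytic function of complex time, and I define $U(x,s):=v(x,is)$. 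Differentiating and using $\partial_tv=\Delta v+Vv$ yields $\partial_sU=i(\partial_tv)(x,is)=i\big(\Delta U+V(x)U\big)$, i.e. $i\partial_sU+\Delta U=-V(x)U$, so that $|(i\partial_s+\Delta)U|\le|V(x)U|$. Hence $U$ satisfies \eqref{Inequality2}, and since $|-V|=|V|$ the potential $-V$ obeys \emph{exactly} the hypotheses of Theorem~\ref{thm2} (the bound $V\in\mathcal F^p$ with $(n-1)/2<p\le n/2$, the smallness $\|V\|_{\mathcal F^p}<1/C$, and, when $p=n/2$, the lower bound $|V(x)|\ge\delta/R^2$), so nothing is lost in the transfer.

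It then remains to propagate the vanishing. For each fixed $x\in D$ the function $t\mapsto v(x,t)$ vanishes for all real $t$ and is analytic in $t$, so by the identity theorem it vanishes identically in complex time; in particular $U(x,s)=v(x,is)=0$ for $x\in D$ and all real $s$, and therefore $U$ vanishes on the half space $D\times\mathbb R$ of $\mathbb R^{n+1}$. Applying Theorem~\ref{thm2} to $U$ forces $U\equiv0$, whence $v(x,is)=0$ for all $x$ and $s$; analyticity in time gives $v\equiv0$, and undoing the integrating factor yields $u\equiv0$, as claimed.

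The hard part is not the algebra above but the analytic continuation step: one must verify that the heat solution $v$ extends to an entire function of time whose rotated restriction $U=v(\cdot,i\,\cdot)$ is a genuine element of $H_{t}^{1}\cap H_{x}^{2}$ and a bona fide (sub)solution of \eqref{Inequality2}, with enough control in the rotated time variable for Theorem~\ref{thm2} to be applicable. This is exactly where the standing assumptions on $g$ (bounded below, tending to $\infty$) and the Gaussian factor produced by the ``$+t$'' term enter: they are what guarantee the requisite decay and regularity of the rotated solution. Making this continuation rigorous, together with the justification that $U$ meets the functional-analytic hypotheses of Theorem~\ref{thm2}, is where the real work of the proof lies.
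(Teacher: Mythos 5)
Your reduction to the heat equation is fine: with $v=e^{-g(t)-t^2/2}u$ one indeed gets $\partial_tv-\Delta v=V(x)v$, and the observation that only $|V|$ matters in \eqref{Inequality2} is correct. But the proof then hinges entirely on the Wick rotation $U(x,s)=v(x,is)$, and this step is a genuine gap, not a technicality to be "made rigorous" later. For a solution merely in $H_t^1\cap H_x^2$ of a heat equation with a rough potential $V\in\mathcal{F}^p$ (which need not be bounded, smooth, or analytic), there is no reason $t\mapsto v(x,t)$ is analytic in time at all; the analytic semigroup heuristic you invoke gives analyticity only in a sector $\{\operatorname{Re}z>0\}$ relative to some initial time, never on the imaginary axis, and certainly not an \emph{entire} extension, which is what evaluating $v(x,is)$ for \emph{all} real $s$ requires. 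Time-analyticity of caloric functions is itself a delicate matter even for $V=0$ (it requires growth hypotheses and yields quantitative radii, not entire extensions), so the central object $U$ of your argument is not known to exist. Since you yourself flag this as "where the real work of the proof lies," the proposal as it stands does not prove the corollary.

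The paper's proof avoids complex time entirely by using a real-variable substitute for the Wick rotation: a weighted Fourier transform in $t$, namely $\widetilde{u}(x,s)=e^{-s^2/2}\int_{-\infty}^{\infty}e^{-its}e^{-g(t)}u(x,t)\,dt$. Differentiating under the integral and integrating by parts in $t$ (the boundary terms vanish because $g(t)\to\infty$, which is exactly what the hypotheses on $g$ are for), the equation \eqref{para} turns into $i\partial_s\widetilde{u}+\Delta\widetilde{u}=-V(x)\widetilde{u}$, so $\widetilde{u}$ satisfies \eqref{Inequality2} with the same time-independent potential. Because the transform acts only in the time variable, $\widetilde{u}$ vanishes on $D\times\mathbb{R}$ whenever $u$ does, and Plancherel's theorem together with the boundedness of $e^{-g}$ shows $\widetilde{u}\in H_s^1\cap H_x^2$, so Theorem~\ref{thm2} applies and gives $\widetilde{u}\equiv0$; injectivity of the Fourier transform (Plancherel again) then yields $u\equiv0$. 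If you want to salvage your outline, this is the fix: replace "evaluate at imaginary time" by "take the Fourier transform in time," which converts $\partial_t$ into multiplication by $is$ with no analyticity assumptions whatsoever.
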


The rest of this paper is organized as follows:
In Section~\ref{sec2} we deal with the class $A_p$ of weights and some estimates on weighted $L^2$ spaces
with weights in the Fefferman-Phong class,
which are to be used for the proof of Theorem~\ref{thm1}.
Then, using a localization argument in the phase space with Littlewood-Paley and multiplier theorems
on weighted $L^2$ spaces with weights in the class $A_2$,
we prove Theorem~\ref{thm1} in Section~\ref{sec3}.
Finally in Section~\ref{sec4}, we prove Theorem~\ref{thm2}
by making use of the Carleman estimate in Theorem~\ref{thm1}
and show how to deduce Corollary~\ref{cor} from Theorem~\ref{thm2}.

Throughout this paper, the letter $C$ stands for constants possibly different at each occurrence.
We also use the symbols $\widehat{f}$, $\mathcal{F}^{-1}(f)$
to denote the Fourier, the inverse Fourier transforms, respectively.


\section{Preliminaries}\label{sec2}

In this section we deal with some preliminary lemmas which will be used in the next section
for the proof of Theorem~\ref{thm1}.

\subsection{The class $A_p$ of weights}
Let $Q$ denote cubes in $\mathbb{R}^n$.
A weight\footnote{It is a nonnegative locally integrable function on $\mathbb{R}^n$.}
$w$ is said to be in the class $A_p$, $1<p<\infty$, if
$$\sup_Q\bigg(\frac1{|Q|}\int_Qw(x)dx\bigg)\bigg(\frac1{|Q|}\int_Qw(x)^{-\frac1{p-1}}dx\bigg)^{p-1}<\infty.$$
We also say that $w$ is in the class $A_1$ if, for almost all $x$,
$$M(w)(x)\leq Cw(x),$$
where $M(w)$ denotes the Hardy-Littlewood maximal function of $w$, given by
$$M(w)(x)=\sup_{Q\ni\,x}\frac1{|Q|}\int_Qw(y)dy.$$
Then, the followings are basic properties of $A_p$ weights:
\begin{equation}\label{properties}
w\in A_2\,\,\Leftrightarrow\,\, w^{-1}\in A_2\quad\text{and}\quad
A_p\subset A_q,\,\, 1\leq p<q<\infty.
\end{equation}
(For more details, see ~\cite{G,St2}.)

Let $\mu$ be a locally finite positive Borel measure on $\mathbb{R}^n$,
and let $\mu^\ast$ be the maximal function of $\mu$ defined by
$$\mu^\ast(x)=\sup_{Q\ni\,x}\{\mu(Q)/|Q|\}.$$
Then one has the following lemma which can be found in ~\cite{CoR}:

\begin{lem}\label{lem4}
Let\, $0<\delta<1$. If\, $\mu^\ast(x)<\infty$ for almost all $x$,
then $(\mu^\ast)^\delta$ is in the class $A_1$.
\end{lem}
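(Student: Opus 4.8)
The plan is to verify the $A_1$ condition in the equivalent form that, for every cube $Q$, the average of $(\mu^\ast)^\delta$ over $Q$ is dominated by its essential infimum on $Q$; this is the classical Coifman--Rochberg mechanism. Fixing a cube $Q$, I would split the measure as $\mu=\mu_1+\mu_2$, where $\mu_1=\mu|_{Q^\ast}$ is the restriction to the concentric dilate $Q^\ast$ of three times the side length and $\mu_2=\mu-\mu_1$. Since $0<\delta<1$, the map $t\mapsto t^\delta$ is subadditive, so $(\mu^\ast)^\delta\le((\mu_1)^\ast)^\delta+((\mu_2)^\ast)^\delta$ and the two pieces may be handled separately.

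For the local piece I would use the weak-type $(1,1)$ bound for the maximal function of a measure, namely $|\{(\mu_1)^\ast>\lambda\}|\le C_n\,\mu(Q^\ast)/\lambda$, inserted into the layer-cake formula $\int_Q((\mu_1)^\ast)^\delta\,dy=\delta\int_0^\infty\lambda^{\delta-1}|\{y\in Q:(\mu_1)^\ast(y)>\lambda\}|\,d\lambda$. Bounding the inner measure by $|Q|$ for small $\lambda$ and by $C_n\mu(Q^\ast)/\lambda$ for large $\lambda$, and splitting the integral at $\lambda_0=\mu(Q^\ast)/|Q|$, the restriction $\delta<1$ makes the tail converge and yields $\frac1{|Q|}\int_Q((\mu_1)^\ast)^\delta\,dy\le C_{n,\delta}(\mu(Q^\ast)/|Q|)^\delta$. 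Because $Q^\ast$ is an admissible cube in the definition of $\mu^\ast(z)$ for every $z\in Q$, one has $\mu(Q^\ast)/|Q|\le 3^n\mu^\ast(z)$, so the right-hand side is at most $C_{n,\delta}(\operatorname{ess\,inf}_{z\in Q}\mu^\ast(z))^\delta$.

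For the far piece I would exploit that $\mu_2$ is supported in the complement of $Q^\ast$, so that any cube $R$ containing a point $y\in Q$ and meeting $\operatorname{supp}\mu_2$ must have side length at least that of $Q$. Enlarging such an $R$ to a cube $R'\supset Q\cup R$ of comparable volume then gives $\mu_2(R)/|R|\le C_n\,\mu(R')/|R'|\le C_n\mu^\ast(z)$ for every $z\in Q$; taking the supremum over $R$ shows that $(\mu_2)^\ast$ is essentially constant on $Q$, with $(\mu_2)^\ast(y)\le C_n\operatorname{ess\,inf}_{z\in Q}\mu^\ast(z)$ for all $y\in Q$. Raising to the $\delta$ power and averaging gives the same bound as the local piece, and combining the two estimates produces $\frac1{|Q|}\int_Q(\mu^\ast)^\delta\,dy\le C_{n,\delta}(\operatorname{ess\,inf}_{z\in Q}\mu^\ast(z))^\delta$ for every cube $Q$, which is exactly the $A_1$ condition.

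The step I expect to be the main obstacle is the far-field comparison: one must make precise the geometric claim that cubes reaching $\operatorname{supp}\mu_2$ from within $Q$ are already of size comparable to $Q$, since this is what allows the passage from the base point $y$ to an arbitrary comparison point $z\in Q$ at the cost of only a dimensional constant, and hence forces $(\mu_2)^\ast$ to be comparable throughout $Q$. The hypothesis $\mu^\ast<\infty$ almost everywhere is used to ensure that $(\mu^\ast)^\delta$ is a genuine weight, finite a.e.\ and locally integrable (the latter being a by-product of the local estimate), while the restriction $\delta<1$ is indispensable, as it is precisely what makes the tail integral in the local piece converge.
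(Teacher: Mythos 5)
Your proof is correct, but there is nothing in the paper to compare it against: the paper does not prove this lemma at all, it simply quotes it from Coifman and Rochberg \cite{CoR}. What you have written is essentially the original Coifman--Rochberg argument from that reference: the near/far splitting $\mu=\mu|_{Q^\ast}+\mu_2$ combined with subadditivity of $t\mapsto t^\delta$; the Kolmogorov-type layer-cake estimate for the near piece, where the weak $(1,1)$ bound $|\{(\mu_1)^\ast>\lambda\}|\leq C_n\mu(Q^\ast)/\lambda$ and the splitting at $\lambda_0=\mu(Q^\ast)/|Q|$ give $\frac{1}{|Q|}\int_Q((\mu_1)^\ast)^\delta\,dy\leq C_{n,\delta}\lambda_0^\delta$, with $\delta<1$ exactly what makes the tail $\int_{\lambda_0}^\infty\lambda^{\delta-2}\,d\lambda$ finite; and the geometric fact that any cube meeting both $Q$ and the support of $\mu_2$ has side length at least $\ell(Q)/\sqrt{n}$, which forces $(\mu_2)^\ast$ to be comparable to $\inf_{z\in Q}\mu^\ast(z)$ throughout $Q$. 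All three steps check out (local finiteness of $\mu$ guarantees $\mu(Q^\ast)<\infty$ for the weak-type bound, and the passage between the pointwise form $M(w)\leq Cw$ a.e.\ and the averaged form $\frac{1}{|Q|}\int_Q w\leq C\,\mathrm{ess\,inf}_Q\,w$ is standard), so your reconstruction is a complete and faithful proof of the cited result.
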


\subsection{Some estimates on weighted $L^2$ spaces}

To begin with, let us consider the following initial value problem
associated to the free Schr\"odinger equation:
\begin{equation*}
\left\{
\begin{array}{ll}
i\partial_tu+\Delta u=0,\quad (x,t)\in\mathbb{R}^n\times\mathbb{R},\\
u(x,0)=f(x).
\end{array}\right.
\end{equation*}
It is well known that the solution can be written as
$$u(x,t)=e^{it\Delta}f(x)$$
using the Fourier transform,
where $e^{it\Delta}$ is called the free propagator which is given by
$$e^{it\Delta}f(x)=(2\pi)^{-n}\int_{\mathbb{R}^n} e^{i(x\cdot\xi-t|\xi|^2)}\widehat{f}(\xi)d\xi.$$

Now we give the following estimate on weighted $L^2$ spaces with respect to this propagator.

\begin{lem}\label{prop1}
Let $n\geq3$. If\, $V\in\mathcal{F}^p$ for $p>(n-1)/2$, then we have
\begin{equation}\label{dualhomo}
\bigg\|\int_{-\infty}^\infty e^{i(t-s)\Delta}F(s)ds\bigg\|_{L_{t,x}^2(|V|)}
\leq C\|V\|_{\mathcal{F}^p}\|F\|_{L_{t,x}^2(|V|^{-1})}.
\end{equation}
\end{lem}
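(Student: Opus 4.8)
The plan is to exploit the crucial feature that the inner integral runs over all of $\mathbb{R}$, so that the operator factors through the free propagator: $\int_{-\infty}^\infty \schrd F(s)\,ds=\schr\int_{-\infty}^\infty e^{-is\Delta}F(s)\,ds$. I introduce $W\colon L_x^2(\mathbb{R}^n)\to L_{t,x}^2(|V|)$, $Wg=\schr g$, whose adjoint $W^\ast\colon L_{t,x}^2(|V|)\to L_x^2$ is $W^\ast h=\int_{-\infty}^\infty e^{-is\Delta}\big(|V|\,h(\cdot,s)\big)\,ds$. A direct computation then gives
$$\int_{-\infty}^\infty \schrd F(s)\,ds=WW^\ast\big(|V|^{-1}F\big),\qquad \big\||V|^{-1}F\big\|_{L_{t,x}^2(|V|)}=\|F\|_{L_{t,x}^2(|V|^{-1})}.$$
Since $\|WW^\ast\|_{L^2(|V|)\to L^2(|V|)}=\|W\|_{L_x^2\to L^2(|V|)}^2$, the estimate \eqref{dualhomo} is equivalent, via $TT^\ast$, to the single homogeneous weighted bound
$$\big\|\schr g\big\|_{L_{t,x}^2(|V|)}\le C\|V\|_{\mathcal{F}^p}^{1/2}\|g\|_{L_x^2}.$$

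To prove this homogeneous estimate I would take the Fourier transform in $t$. Since the space-time Fourier transform of $\schr g$ is carried by the paraboloid $\tau=-|\xi|^2$, Plancherel in $t$ together with the change of variables $\tau=-\rho^2$ (polar decomposition in frequency) rewrites $\int\!\int|\schr g|^2|V|\,dx\,dt$ as
$$c\int_0^\infty \frac1\rho\int_{\mathbb{R}^n}\big|E_\rho(\widehat g|_{S_\rho})(x)\big|^2\,|V(x)|\,dx\,d\rho,$$
where $S_\rho$ is the sphere of radius $\rho$ and $E_\rho f(x)=\int_{S_\rho}e^{ix\cdot\xi}f(\xi)\,d\sigma_\rho(\xi)$ is the associated extension operator. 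The fact that the weight depends only on $x$ is precisely what makes this reduction to a family of sphere-extension problems clean.

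The main ingredient is then the weighted $L^2$ extension estimate on the sphere with Fefferman--Phong weights: for $p>(n-1)/2$,
$$\int_{\mathbb{R}^n}|E_\rho f|^2\,|V|\,dx\le C\,\rho\,\|V\|_{\mathcal{F}^p}\,\|f\|_{L^2(d\sigma_\rho)}^2.$$
I would deduce this from the unit-sphere case (the Agmon--H\"ormander/Ruiz--Vega type estimate, whose threshold $p>(n-1)/2$ reflects the decay $|\widehat{d\sigma}(x)|\lesssim|x|^{-(n-1)/2}$) by rescaling, using the dilation identity $\|V(\cdot/\rho)\|_{\mathcal{F}^p}=\rho^2\|V\|_{\mathcal{F}^p}$; here Lemma~\ref{lem4} is what lets one pass from the Fefferman--Phong condition to an $A_1$ weight and run the weighted restriction argument. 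Inserting this bound, the powers of $\rho$ cancel exactly by scaling and polar coordinates recombine $\int_0^\infty\|\widehat g\|_{L^2(d\sigma_\rho)}^2\,d\rho=\|\widehat g\|_{L^2(\mathbb{R}^n)}^2$, which yields the homogeneous estimate and hence \eqref{dualhomo}.

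The step I expect to be the genuine obstacle is the weighted sphere-extension estimate at the critical regularity $p>(n-1)/2$, uniformly in the radius $\rho$ and with the correct linear dependence on $\|V\|_{\mathcal{F}^p}$. This is where the Fefferman--Phong structure and the $A_1$ machinery (Lemma~\ref{lem4}) do the real work; the remaining difficulty is the bookkeeping needed to make the $\rho$-powers cancel so that the final constant is exactly $\|V\|_{\mathcal{F}^p}$.
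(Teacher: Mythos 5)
Your proposal is correct and takes essentially the same route as the paper: your $TT^\ast$ factorization is precisely the paper's duality reduction to the homogeneous bound $\|e^{it\Delta}g\|_{L_{t,x}^2(|V|)}\leq C\|V\|_{\mathcal{F}^p}^{1/2}\|g\|_{L^2}$, which the paper then proves exactly as you do, via Plancherel in $t$, polar coordinates, and the weighted sphere-extension estimate for Fefferman--Phong weights (cited as known from Chanillo--Sawyer and Ruiz--Vega; your explicit radius-$\rho$ rescaling with $\|V(\cdot/\rho)\|_{\mathcal{F}^p}=\rho^2\|V\|_{\mathcal{F}^p}$ is the step the paper leaves implicit). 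The only inaccuracy is peripheral: Lemma~\ref{lem4} is not what underlies that restriction estimate --- the paper simply cites the estimate, and Lemma~\ref{lem4} is used only for the $A_1/A_2$ Littlewood--Paley argument in Section~\ref{sec3}.
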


\begin{proof}
It is enough to show that
\begin{equation}\label{homostr}
\big\|e^{it\Delta}f\big\|_{L_{t,x}^2(|V|)}\leq C\|V\|_{\mathcal{F}^p}^{1/2}\|f\|_2.
\end{equation}
Indeed, by duality this is equivalent to
\begin{equation}\label{dual}
\bigg\|\int_{-\infty}^\infty e^{-is\Delta}F(s)ds\bigg\|_2
\leq C\|V\|_{\mathcal{F}^p}^{1/2}\|F\|_{L_{t,x}^2(|V|^{-1})}.
\end{equation}
So, combining ~\eqref{homostr} and ~\eqref{dual}, we get ~\eqref{dualhomo}.

Now we show ~\eqref{homostr}.
We follow the argument in ~\cite{RV2} in which such estimate was implicitly
studied to treat regularizing properties of the propagator.
Using polar coordinates and changing variables $r^2=\lambda$, we see that
\begin{align*}
e^{it\Delta}f&=\int_0^\infty e^{-itr^2}\int_{S_r^{n-1}}e^{ix\cdot\xi}\widehat{f}(\xi)d\sigma_r(\xi)dr\\
&=\frac12\int_0^\infty e^{-it\lambda}\int_{S_{\sqrt{\lambda}}^{n-1}}
e^{ix\cdot\xi}\widehat{f}(\xi)d\sigma_{\sqrt{\lambda}}(\xi)\lambda^{-1/2}d\lambda.
\end{align*}
Hence by Plancherel's theorem, it follows that
\begin{align*}
\big\|e^{it\Delta}f\big\|_{L_{t,x}^2(|V|)}^2&\leq
C\int_{\mathbb{R}^n}\bigg(\int_0^\infty\bigg|\int_{S_{\sqrt{\lambda}}^{n-1}}
e^{ix\cdot\xi}\widehat{f}(\xi)d\sigma_{\sqrt{\lambda}}(\xi)\bigg|^2\lambda^{-1}d\lambda\bigg)|V(x)|dx\\
&\leq C\int_0^\infty\bigg(\int_{\mathbb{R}^n}
\bigg|\int_{S_r^{n-1}}e^{ix\cdot\xi}\widehat{f}(\xi)d\sigma_r(\xi)\bigg|^2|V(x)|dx\bigg)r^{-1}dr.
\end{align*}
Now, combining this and the known weighted restriction estimate\,\footnote{\,It can be found
in ~\cite{CS},~\cite{CR} and ~\cite{RV2}. For a simple proof, see also~\cite{RV}.}
$$\big\|\widehat{fd\sigma}\big\|_{L^2(|V|)}\leq C\|V\|_{\mathcal{F}^p}^{1/2}\|f\|_{L^2(S^{n-1})}$$
for $V\in\mathcal{F}^p$ with $p>(n-1)/2$, $n\geq3$, we get
\begin{align*}
\big\|e^{it\Delta}f\big\|_{L_{t,x}^2(|V|)}^2&\leq
C\|V\|_{\mathcal{F}^p}\int_0^\infty\int_{S_r^{n-1}}|\widehat{f}(\xi)|^2d\sigma_r(\xi)dr\\
&=C\|V\|_{\mathcal{F}^p}\|f\|_2^2
\end{align*}
as desired. This completes the proof.
\end{proof}

We close this section with recalling the following weighted $L^2$ estimate
for the resolvent of the Laplacian (see Lemma (2.10) in~\cite{CS}).

\begin{lem}\label{lem3}
Let $V\in\mathcal{F}^p$, $p>\frac{n-1}2$, $n\geq3$.
Suppose that $\text{Im }z\neq0$ and $\text{Re }z<0$.
Then one has
\begin{equation*}
\|f\|_{L^2(|V|)}\leq C\|V\|_{\mathcal{F}^p}\|(-\Delta+z)f\|_{L^2(|V|^{-1})}
\end{equation*}
with $C$ independent of $z\in\mathbb{C}$.
\end{lem}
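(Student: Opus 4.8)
The plan is to turn the estimate into a uniform operator bound for the resolvent and then to feed in the weighted restriction machinery that already drives Lemma~\ref{prop1}. Since $\text{Im }z\neq0$, the point $z$ lies off the spectrum $[0,\infty)$ of $-\Delta$, so $-\Delta+z$ is invertible; putting $g=(-\Delta+z)f$ and $f=(-\Delta+z)^{-1}g=:R_zg$, the asserted inequality reads $\|R_zg\|_{L^2(|V|)}\le C\|V\|_{\mathcal F^p}\|g\|_{L^2(|V|^{-1})}$. Writing $g=|V|^{1/2}h$ (the weighted norms do not see $\{V=0\}$, so nothing is lost), this is exactly the $L^2$-boundedness of $S_z:=|V|^{1/2}R_z|V|^{1/2}$ with $\|S_z\|\le C\|V\|_{\mathcal F^p}$, and the entire difficulty is to make this constant independent of $z$.

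I would analyze $S_z$ through the convolution kernel $G_z$ of $R_z=(-\Delta+z)^{-1}$ and split it dyadically in $|x-y|$. Near the diagonal, $G_z(x-y)$ behaves like the Newtonian kernel $|x-y|^{-(n-2)}$ uniformly in $z$ (this uses $n\geq3$), so the local part of $S_z$ is dominated by $|V|^{1/2}I_2|V|^{1/2}$, where $I_2$ is the Riesz potential of order $2$. Its $L^2$ operator norm is $\le C\|V\|_{\mathcal F^p}$ by the Fefferman--Phong trace inequality $\int|w|^2|V|\,dx\le C\|V\|_{\mathcal F^p}\|\nabla w\|_2^2$ (valid for $V\in\mathcal F^p$, $p>1$, which holds here since $p>(n-1)/2\geq1$), and this contribution carries no $z$-dependence at all.

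The substance is the far part, where $G_z(x-y)$ has the oscillatory outgoing form $\sim|x-y|^{-(n-1)/2}e^{\pm i\sqrt{-z}\,|x-y|}$ times a damping factor of modulus $\le1$. This is precisely the regime governed by the weighted restriction estimate to the sphere $S^{n-1}_{\sqrt{\lambda_0}}$, $\lambda_0=-\text{Re }z$ — the same estimate $\|\widehat{h\,d\sigma}\|_{L^2(|V|)}\le C\|V\|_{\mathcal F^p}^{1/2}\|h\|_{L^2(S^{n-1})}$ used in the proof of Lemma~\ref{prop1}, which is exactly where the hypothesis $p>(n-1)/2$ enters. The point to exploit is that the Fefferman--Phong norm is scale invariant, so the restriction bound on $S^{n-1}_r$ holds with a constant independent of the radius; combined with the oscillation $e^{\pm i\sqrt{-z}\,|x-y|}$ this lets the dyadic pieces of the tail be summed. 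Making this summation \emph{uniform in $z$} — equivalently, controlling the resolvent as $\text{Im }z\to0$, a limiting-absorption statement — is the main obstacle: a crude term-by-term bound diverges, since near the sphere $|\xi|^2=-z$ the symbol $(|\xi|^2+z)^{-1}$ is nearly singular, and only the oscillation of the kernel, harnessed through the restriction estimate, recovers a constant independent of $z$.

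Equivalently, one may argue spectrally: with $E'(\lambda)$ the density of the spectral measure of $-\Delta$, the same polar-coordinate computation as in Lemma~\ref{prop1} shows that $T_\lambda:=|V|^{1/2}E'(\lambda)|V|^{1/2}$ is a nonnegative operator with $\|T_\lambda\|\le C\|V\|_{\mathcal F^p}$ uniformly in $\lambda$ (a restatement of the weighted restriction estimate after scaling to $S^{n-1}_{\sqrt\lambda}$), and $S_z=\int_0^\infty(\lambda+z)^{-1}T_\lambda\,d\lambda$; the uniform bound then again hinges on cancellation in this integral near $\lambda=-\text{Re }z$ rather than on the triangle inequality. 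Either way the result is the weighted uniform resolvent estimate recorded as Lemma~(2.10) in~\cite{CS}.
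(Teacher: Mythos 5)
Two things need to be separated here. First, the paper itself offers no proof of this lemma: it is recalled verbatim from Chanillo--Sawyer (``see Lemma (2.10) in~\cite{CS}''), and your proposal's final sentence defers to exactly that source. At the level of what is actually established, your treatment and the paper's therefore coincide: both rest on the citation, and your identification of the result as a weighted uniform resolvent estimate of limiting-absorption type, with the hypothesis $p>(n-1)/2$ entering through the weighted restriction estimate, is accurate.

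Second, judged as a self-contained argument -- which is how most of your proposal reads -- there is a genuine, and self-acknowledged, gap. The parts you carry out are the routine ones: the reduction to the operator bound $\|\,|V|^{1/2}R_z|V|^{1/2}\|_{L^2\to L^2}\leq C\|V\|_{\mathcal{F}^p}$, the near-diagonal estimate via kernel domination by the Riesz potential $I_2$ together with the Fefferman--Phong trace inequality (which is indeed $z$-independent), and the uniform bound $\|T_\lambda\|\leq C\|V\|_{\mathcal{F}^p}$ for the weighted spectral density (correct: the scaling $\|V(\cdot/r)\|_{\mathcal{F}^p}=r^{2}\|V\|_{\mathcal{F}^p}$ exactly offsets the Jacobian factor $\lambda^{-1/2}$). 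But the heart of the lemma is precisely what you label ``the main obstacle'': bounding the oscillatory far part, equivalently $\int_0^\infty(\lambda+z)^{-1}T_\lambda\,d\lambda$, uniformly as $\mathrm{Im}\,z\to0$, where the triangle inequality loses a factor like $\log(1/|\mathrm{Im}\,z|)$ near the shell $\lambda=-\mathrm{Re}\,z$ (and also diverges at spectral infinity). Saying that ``the oscillation, harnessed through the restriction estimate, recovers a constant independent of $z$'' names the mechanism but does not implement it: one needs, for instance, a H\"older-continuity estimate for $\lambda\mapsto T_\lambda$ in operator norm so that the near-singular part of the integral can be handled by cancellation, or a genuine stationary-phase/dyadic argument on the kernel $e^{i\sqrt{-z}\,|x-y|}|x-y|^{-(n-1)/2}$, and neither is attempted. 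Since everything else is standard, this missing step is not a detail but the entire content of the statement; as a proof independent of~\cite{CS} the proposal is incomplete, while as a citation it adds nothing beyond what the paper already does.
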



\section{Carleman estimates on weighted $L^2$ spaces}\label{sec3}

Let us denote by $L(D)$ a first-order differential operator which is given by
$$L(D)=\langle \widetilde{c},\nabla\rangle+z,$$
where $\widetilde{c}\in \mathbb C^n$ and $z=a+ib\in \mathbb C$.
Then one can easily check that there is $L(D)$ such that
$$e^{\beta\langle(x,t),\nu\rangle}(i\partial_t+\Delta)e^{-\beta\langle(x,t),\nu\rangle}=i\partial_t+\Delta+L(D)$$
with $\widetilde{c},z$, depending on $\beta\in\mathbb{R}$ and $\nu\in\mathbb{R}^{n+1}$.
In fact,
$$L(D)=-2\beta\nabla\cdot\nu'+\beta^2|\nu'|^2-i\beta\nu_{n+1},$$
where $\nu=(\nu_1,...,\nu_n,\nu_{n+1})=(\nu',\nu_{n+1})$.
Hence the Carleman estimate~\eqref{Carl} in Theorem~\ref{thm1} follows directly from
the Sobolev-type inequality~\eqref{homo} below.
The aim of this section is to prove the following proposition.

\begin{prop}
Let\, $n\geq3$.
If\,\, $V\in\mathcal{F}^p$ for $p>(n-1)/2$,\,
then we have
\begin{equation}\label{homo}
\|u\|_{L_{t,x}^2(|V|)}\leq C\|V\|_{\mathcal{F}^p}
\|(i\partial_t+\Delta+L(D))u\|_{L_{t,x}^2(|V|^{-1})}
\end{equation}
with $C$, independent of $L(D)$, whenever $u\in C_0^\infty(\mathbb{R}^{n+1})$.
\end{prop}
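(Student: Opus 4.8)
The plan is to view $u$ as the image of $F:=(i\partial_t+\Delta+L(D))u$ under the inverse of the constant-coefficient operator $i\partial_t+\Delta+L(D)$, and to estimate that inverse as a space--time Fourier multiplier from $L_{t,x}^2(|V|^{-1})$ to $L_{t,x}^2(|V|)$. Taking the full Fourier transform in $(x,t)$, the symbol is
$$P(\xi,\tau)=-\tau-|\xi|^2+i\inp{\widetilde c}{\xi}+z,\qquad z=a+ib,$$
so that, since $\widetilde c$ is real, $\mathrm{Re}\,P=a-\tau-|\xi|^2$ vanishes on the paraboloid $\tau=a-|\xi|^2$. Everything hinges on separating the \emph{resonant} region, where $(\xi,\tau)$ lies near this paraboloid, from the \emph{elliptic} region, where $P$ is bounded away from zero. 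The decisive structural feature, which distinguishes this from the time-dependent theory, is that the weight $|V|$ is independent of $t$; this is what will let me use Plancherel in $t$ freely, and it is precisely what forces a new argument.

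Concretely, I would first take the Fourier transform in $t$ alone, reducing to the fixed-frequency equation $(\Delta+\inp{\widetilde c}{\nabla}+z-\tau)\widetilde u(\cdot,\tau)=\widetilde F(\cdot,\tau)$ for each $\tau$. For $\tau$ in the range where, after completing the square using the identity $\mathrm{Re}\,z=|\widetilde c|^2/4$ enjoyed by the conjugation data, the spatial operator is a resolvent of $-\Delta$ with spectral parameter off the positive real axis (kept off it by $b\neq0$), Lemma~\ref{lem3} bounds $\|\widetilde u(\cdot,\tau)\|_{L^2(|V|)}$ by $C\|V\|_{\mathcal{F}^p}\|\widetilde F(\cdot,\tau)\|_{L^2(|V|^{-1})}$ uniformly in $\tau$. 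Because $|V|$ does not depend on $t$, Plancherel in $t$ turns this into the desired space--time bound by squaring and integrating in $\tau$. The remaining, genuinely degenerate case is where the imaginary part $b$ vanishes and the symbol actually meets the characteristic paraboloid $\tau=-|\xi|^2$ (as happens for $\beta=0$); there the resolvent route fails, and this piece is exactly the on-characteristic restriction/Strichartz estimate \eqref{dualhomo} supplied by Lemma~\ref{prop1}, the multiplier near the paraboloid being, after a modulation absorbing $a$ and $b$, the operator $F\mapsto\int\schrd F(s)\,ds$.

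The main obstacle, and the reason the two lemmas do not simply glue together, is the first-order term $\inp{\widetilde c}{\nabla}$ together with the demand for uniformity in $L(D)$, i.e. in $\beta$ and $\nu$. One cannot eliminate $\inp{\widetilde c}{\nabla}$ by conjugating with $e^{\langle\widetilde c,x\rangle/2}$ without replacing $|V|$ by $|V|e^{\pm\langle\widetilde c,x\rangle}$, which destroys the Fefferman--Phong bound uniformly in $\beta$; the first-order term is therefore the true source of difficulty and must be carried, not removed. I would handle it by localizing in the phase space with a Littlewood--Paley decomposition and treating the localized pieces of $1/P$, as well as the cutoffs separating the resonant and elliptic regions, as Fourier multipliers on weighted $L^2$ spaces, where on each frequency shell $\inp{\widetilde c}{\xi}$ is a controlled perturbation.

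The weights $|V|^{\pm1}$ are not themselves in $A_2$, so the passage to weighted multiplier bounds is not automatic. Here Lemma~\ref{lem4} is the key device: the maximal function of the measure $|V|\,dx$ yields an $A_1\subset A_2$ weight dominating $|V|$ with $A_2$-characteristic controlled by $\|V\|_{\mathcal{F}^p}$, and on such weighted spaces the relevant weighted Littlewood--Paley and multiplier theorems apply. This is what allows the sharp cutoffs to pass through the weighted norms while keeping all constants independent of $L(D)$. Assembling the resonant contribution (via Lemma~\ref{prop1}) and the elliptic contribution (via Lemma~\ref{lem3}) so that they recombine with a constant depending only on $\|V\|_{\mathcal{F}^p}$, uniformly in $\beta$ and $\nu$, is the crux of the argument and yields \eqref{homo}.
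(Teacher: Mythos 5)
Your proposal has the paper's skeleton --- Littlewood--Paley localization, weighted multiplier theory on $A_2$ spaces obtained from maximal functions via Lemma~\ref{lem4}, Lemma~\ref{lem3} for the non-degenerate part (using Plancherel in $t$, which the $t$-independence of $V$ permits), and Lemma~\ref{prop1} for the degenerate part --- but at the two places where the real work happens, the steps you describe would fail. For the elliptic part, you propose to complete the square using $\mathrm{Re}\,z=|\widetilde c|^2/4$. That identity is particular to the Carleman conjugation, whereas \eqref{homo} is claimed with $C$ independent of an \emph{arbitrary} $L(D)$; worse, completing the square means shifting $\xi\mapsto\xi-i\widetilde c/2$, which is exactly the conjugation by the real exponential $e^{\langle\widetilde c,x\rangle/2}$ that you yourself rule out, so your ``spatial resolvent'' is never actually an operator of the form $-\Delta+z$ with constant $z$, and Lemma~\ref{lem3} does not apply to it. The paper instead removes $\mathrm{Im}\,\widetilde c$ and $a=\mathrm{Re}\,z$ by the \emph{unimodular} modulations $u=e^{-i\langle d/2,x\rangle}v$ and $u=e^{iat}v$ (free in weighted norms), rotates and rescales so that $L(D)=\partial_{x_n}$, and then on each dyadic shell $|\xi_n|\sim 2^{-k}$ \emph{freezes} the drift symbol, $i\xi_n\to i2^{-k}$; the frozen multiplier $(\tau+|\xi|^2+i2^{-k})^{-1}$ is, for each fixed $\tau$, a genuine constant-parameter resolvent, and Lemma~\ref{lem3} plus Plancherel in $\tau$ gives a bound uniform in $k$.

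For the resonant part, the operator $F\mapsto\int_{\mathbb{R}}\schrd F(s)\,ds$ is \emph{not} ``the multiplier near the paraboloid'': its space--time symbol is (up to a constant) the surface delta $\delta(\tau+|\xi|^2)$, i.e.\ restriction to the characteristic set, not the singular symbol $1/(\tau+|\xi|^2+i\xi_n)$ localized near its zero set. Moreover, the zero set is the codimension-two set $\{\tau+|\xi|^2=0,\ \xi_n=0\}$, whose dangerous neighborhood in $\tau+|\xi|^2$ has width shrinking with $|\xi_n|$, so a fixed resonant/elliptic split in the distance to the paraboloid cannot yield uniform constants; and the error made by freezing $i\xi_n\to i2^{-k}$ is not a ``controlled perturbation'' absorbable into the resolvent bound. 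The paper's bridge is precisely here: subtract the frozen multiplier, change variables $\rho=\tau+|\xi|^2$, and represent the difference operator as $\int e^{it\rho}\int\schrd F_\rho(s)\,ds\,d\rho$ with $\widehat{F_\rho}$ carrying the factor $m_{k,\rho}(\xi_n)=(2^{-k}-\xi_n)/((\rho+i\xi_n)(\rho+i2^{-k}))$; then \eqref{dualhomo} handles the evolution operator, the weighted Marcinkiewicz theorem in $\xi_n$ (legitimate since $W^{\pm1}\in A_2(\mathbb{R},dx_n)$) handles $m_{k,\rho}$, and the $\rho$-integral converges uniformly in $k$ because $\int 2^{-k}(\rho^2+2^{-2k})^{-1}\,d\rho=\pi$. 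Finally, your weight replacement needs repair: $M(|V|)$ itself need not be $A_1$; one must take $W=(M(|V|^\alpha))^{1/\alpha}$ with $1<\alpha<p$, with the maximal function taken one-dimensionally in $x_n$ so as to match the one-dimensional Littlewood--Paley decomposition, and one needs the Chanillo--Sawyer bound $\|W\|_{\mathcal F^p}\le C\|V\|_{\mathcal F^p}$ to end up with the constant $C\|V\|_{\mathcal{F}^p}$ in \eqref{homo}.
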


\begin{proof}

Firstly, we reduce the proof to the following special case where $L(D)=\partial/\partial x_n$:
\begin{equation}\label{special}
\|u\|_{L_{t,x}^2(|V|)}\leq C\|V\|_{\mathcal{F}^p}
\|(i\partial_t+\Delta+\partial_{x_n})u\|_{L_{t,x}^2(|V|^{-1})}.
\end{equation}
Indeed, setting $u=e^{-i\langle d/2,x\rangle}v$, $d\in\mathbb{R}^n$,
we see that
$$(i\partial_t+\Delta+L(D))u=e^{-i\langle d/2,x\rangle}
(i\partial_t+\Delta+\langle \widetilde{c}-id,\nabla\rangle+z-i\langle \widetilde{c},d/2\rangle-|d/2|^2)v.$$
Hence we only need to consider the case where $\widetilde{c}\in\mathbb{R}^n$.
Similarly, by setting $u=e^{iat}v$, it follows that
$$(i\partial_t+\Delta+L(D))u=e^{iat}(i\partial_t+\Delta+\langle \widetilde{c},\nabla\rangle+ib)v.$$
So we may assume that $a=0$.
As it is well known, the Laplacian $\Delta$ is invariant under rotations.
From this fact, we can assume that
$\langle \widetilde{c},\nabla\rangle=c\partial_{x_n}$, where $c\in\mathbb{R}$.
So far, we have reduced the proof to the case where $L(D)=c\partial/\partial x_n+ib$
with $c,b\in\mathbb{R}$.
For simplicity of notation, we shall also assume that $c=1$ and $b=0$,
because it does not affect all the arguments in the proof.

Nextly, we consider the following one-dimensional maximal function\,\footnote{\,Namely,
$W=(M(|V|^\alpha))^{1/\alpha}$, where $M(f)$ denotes the Hardy-Littlewood maximal function of $f$.}
$$W(x)=\sup_\mu\bigg(\frac1{2\mu}\int_{x_n-\mu}^{x_n+\mu}|V(x_1,...,x_{n-1},\lambda)|^\alpha d\lambda\bigg)^{1/\alpha},\quad\alpha\geq1,$$
in the $x_n$ variable.
Then, if\, $V\in\mathcal{F}^p$ for $p>(n-1)/2$, and $1\leq \alpha<p$, one has
\begin{equation}\label{relation}
W\in\mathcal{F}^p\quad\text{and}\quad
\|W\|_{\mathcal{F}^p}\leq C\|V\|_{\mathcal{F}^p},
\end{equation}
which can be found in~\cite{CS}. (See Lemma (2.14) there.)
By ~\eqref{relation}, it is now enough to show the estimate~\eqref{special} by replacing $V$ with $W$:
\begin{equation}\label{repl}
\|u\|_{L_{t,x}^2(W)}\leq C\|W\|_{\mathcal{F}^p}
\|(i\partial_t+\Delta+\partial_{x_n})u\|_{L_{t,x}^2(W^{-1})}.
\end{equation}
The motivation behind this replacement is because $W\in A_1(\mathbb{R},dx_n)$ if $1<\alpha<p$.
This follows easily from ~\eqref{relation} and Lemma~\ref{lem4} with $\delta=1/\alpha$.
Hence, by ~\eqref{properties}, we also see that
$$W\in A_2(\mathbb{R},dx_n)\quad\text{and}\quad W^{-1}\in A_2(\mathbb{R},dx_n).$$
This makes it possible to deal with the estimate~\eqref{repl}
by making use of Littlewood-Paley and multiplier theorems on weighted $L^2$ spaces
with weights in the class $A_2$.
To do so, we shall work on Fourier transform side using a localization argument
in the phase space of the $\xi_n$ variable.

Note first that ~\eqref{repl} is equivalent to
\begin{equation}\label{multi}
\bigg\|\mathcal{F}^{-1}\bigg(\frac{\widehat{f}(\xi,\tau)}{\tau+|\xi|^2+i\xi_n}\bigg)\bigg\|_{L_{t,x}^2(W)}
\leq C\|W\|_{\mathcal{F}^p}\|f\|_{L_{t,x}^2(W^{-1})}
\end{equation}
for Schwartz functions $f\in\mathcal{S}(\mathbb{R}^{n+1})$.
Let us set
$$m(\tau,\xi)=(\tau+|\xi|^2+i\xi_n)^{-1}\quad\text{and}\quad
\widehat{Tf}(\tau,\xi)=m(\tau,\xi)\widehat{f}(\tau,\xi).$$
Also, let $\chi\in C_0^\infty(\mathbb{R})$ be such that
$\chi(r)=1$ if $|r|\sim1$ and zero otherwise.
Now we set $\chi_k(t)=\chi(2^kt)$ and
$$\widehat{T_kf}(\tau,\xi)=m(\tau,\xi)\chi_k(\xi_n)\widehat{f}(\tau,\xi)=m_k(\tau,\xi)\widehat{f}(\tau,\xi).$$
Since $W\in A_2(\mathbb{R},dx_n)$,
by the Littlewood-Paley theorem on weighted $L^2$ spaces (see Theorem 1 in~\cite{K}),
we see that
\begin{equation}\label{wrf}
\big\|\sum_kT_kf\big\|_{L_{t,x}^2(W)}\leq C\bigg(\iint\sum_k|T_kf|^2W dxdt\bigg)^{1/2}.
\end{equation}
Now we assume for the moment that
\begin{equation}\label{multi3}
\|T_kf\|_{L_{t,x}^2(W)}
\leq C\|W\|_{\mathcal{F}^p}\|f\|_{L_{t,x}^2(W^{-1})}
\end{equation}
with $C$, independent of $k\in\mathbb{Z}$.
Then, using this,
the right-hand side of ~\eqref{wrf} is bounded by
$$C\|W\|_{\mathcal{F}^p}\bigg(\iint\sum_k|f_k|^2W^{-1}dxdt\bigg)^{1/2},$$
where $\widehat{f_k}(\tau,\xi)=\chi_k(\xi_n)\widehat{f}(\tau,\xi)$.
Since $W^{-1}\in A_2(\mathbb{R},dx_n)$, by the Littlewood-Paley theorem again,
this is also bounded by
$$C\|W\|_{\mathcal{F}^p}\bigg(\iint|f|^2W^{-1}dxdt\bigg)^{1/2}$$
as desired.

It remains to show the estimate~\eqref{multi3}.
Namely, we have to show that ~\eqref{multi} holds with $C$, independent of $k\in\mathbb{Z}$,
for functions $f$ satisfying $\text{supp}\,\widehat{f}\subset\{(\xi,\tau):|\xi_n|\sim2^{-k}\}$.
To show this, we assume the following estimate which will be shown later:
\begin{equation}\label{multi2}
\bigg\|\mathcal{F}^{-1}\bigg(\frac{\widehat{f}(\xi,\tau)}{\tau+|\xi|^2+i2^{-k}}\bigg)\bigg\|_{L_{t,x}^2(W)}
\leq C\|W\|_{\mathcal{F}^p}\|f\|_{L_{t,x}^2(W^{-1})}
\end{equation}
whenever $\text{supp}\,\widehat{f}\subset\{(\xi,\tau):|\xi_n|\sim2^{-k}\}$.
Then, by taking differences we are reduced to showing that
\begin{equation*}
\bigg\|\mathcal{F}^{-1}\bigg(\frac{(2^{-k}-\xi_n)\widehat{f}(\xi,\tau)}
{(\tau+|\xi|^2+i\xi_n)(\tau+|\xi|^2+i2^{-k})}\bigg)\bigg\|_{L_{t,x}^2(W)}
\leq C\|W\|_{\mathcal{F}^p}\|f\|_{L_{t,x}^2(W^{-1})}.
\end{equation*}
By changing  variables $\tau+|\xi|^2\rightarrow\rho$ in the above,
we need to show that
\begin{equation}\label{multi4}
\bigg\|\int_{\mathbb{R}}e^{it\rho}\int_{\mathbb{R}}e^{i(t-s)\Delta}F(s)dsd\rho\bigg\|_{L_{t,x}^2(W)}
\leq C\|W\|_{\mathcal{F}^p}\|f\|_{L_{t,x}^2(W^{-1})},
\end{equation}
where
$$\widehat{F(\cdot,s)}(\xi)=\frac{2^{-k}-\xi_n}{(\rho+i\xi_n)(\rho+i2^{-k})}e^{-is\rho}\widehat{f(\cdot,s)}(\xi).$$
By Minkowski's inequality and the estimate ~\eqref{dualhomo} in Lemma~\ref{prop1},
the left-hand side of ~\eqref{multi4} is bounded by
\begin{equation}\label{dnff}
C\|W\|_{\mathcal{F}^p}\int_{\mathbb{R}}\bigg\|\mathcal{F}^{-1}
\bigg(\frac{2^{-k}-\xi_n}{(\rho+i\xi_n)(\rho+i2^{-k})}e^{-is\rho}\widehat{f(\cdot,s)}(\xi)\bigg)\bigg\|_
{L_{t,x}^2(W^{-1})}d\rho.
\end{equation}
Now, let us set
$$m_{k,\rho}(\xi_n)=\frac{2^{-k}-\xi_n}{(\rho+i\xi_n)(\rho+i2^{-k})}.$$
Then we see that
$$|m_{k,\rho}(\xi_n)|\leq \frac{C2^{-k}}{\rho^2+2^{-2k}}$$
and
$$\bigg|\frac{\partial m_{k,\rho}(\xi_n)}{\partial\xi_n}\bigg|\leq \frac{C}{\rho^2+2^{-2k}},$$
since we are assuming $\text{supp}\,\widehat{f}\subset\{(\xi,\tau):|\xi_n|\sim2^{-k}\}$.
Hence, $m_{k,\rho}(\xi_n)$ satisfies the conditions of the Marcinkiewicz multiplier theorem.
Then, by the multiplier theorem on weighted $L^2$ spaces (see Theorem 2 in~\cite{K}),
~\eqref{dnff} is also bounded by
\begin{equation*}
C\|W\|_{\mathcal{F}^p}\int_{\mathbb{R}}\frac{2^{-k}}{\rho^2+2^{-2k}}\|f\|_{L_{t,x}^2(W^{-1})}d\rho.
\end{equation*}
Hence we get ~\eqref{multi4}.
Now we show ~\eqref{multi2}. Note that
\begin{align*}
\bigg\|\iint e^{ix\cdot\xi+it\tau}
&\frac{\widehat{f}(\xi,\tau)}{\tau+|\xi|^2+i2^{-k}}d\xi d\tau\bigg\|_{L_{t,x}^2(W)}^2\\
&=\int W\int\bigg|\int e^{it\tau}\bigg(\int e^{ix\cdot\xi}
\frac{\widehat{f}(\xi,\tau)}{\tau+|\xi|^2+i2^{-k}}d\xi\bigg)d\tau\bigg|^2dtdx\\
&=\int W\int\bigg|\int e^{ix\cdot\xi}
\frac{\widehat{f}(\xi,\tau)}{\tau+|\xi|^2+i2^{-k}}d\xi\bigg|^2d\tau dx\\
&=\iint\bigg|\int e^{ix\cdot\xi}
\frac{\widehat{f}(\xi,\tau)}{\tau+|\xi|^2+i2^{-k}}d\xi\bigg|^2W dxd\tau.
\end{align*}
By Lemma~\ref{lem3}, the last term in the above is bounded by
$$\int\|W\|_{\mathcal{F}^p}^2\big\|\widehat{f(x,\cdot)}(\tau)\big\|_{L_{x}^2(W^{-1})}^2d\tau.$$
Now Plancherel's theorem in $\tau$ gives ~\eqref{multi2}.
This completes the proof.
\end{proof}


\section{Global unique continuation}\label{sec4}

In this section we first prove Theorem ~\ref{thm2} by making use of the Carleman estimate~\eqref{Carl},
and then show how to deduce Corollary~\ref{cor} from Theorem~\ref{thm2}.

\begin{proof}[Proof of Theorem ~\ref{thm2}]
Fix a unit vector $\nu_0\in\mathbb{R}^{n+1}$ arbitrarily. By translation we may assume that
$u$ vanishes in the half space $\{(x,t):\langle(x,t),\nu_0\rangle>0 \}$ of $\mathbb{R}^{n+1}$.
Obviously, by induction, it is enough to show that there is
$\sigma>0$ so that $u=0$ in the set
$$S_\sigma(\nu_0)=\{(x,t):-\sigma<\langle(x,t),\nu_0\rangle\leq0\}.$$

Let $\psi:\mathbb{R}^n\times\mathbb{R}\rightarrow[0,\infty)$ be a
smooth function which is supported in $\{(x,t):|x|,|t|\leq1\}$ and satisfies
$$\iint_{\mathbb{R}^n\times\mathbb{R}}\psi(x,t)dxdt=1.$$
For $0<\varepsilon<1$, we set
$\psi_\varepsilon(x,t)=\varepsilon^{-(n+1)}\psi(x/\varepsilon,t/\varepsilon)$.
Let $\phi:\mathbb{R}^n\times\mathbb{R}\rightarrow[0,1]$ be a smooth
function equal to $1$ in $\{(x,t):|x|,|t|\leq1\}$ and equal to $0$
in $\{(x,t):|x|\geq2 \text{ or } |t|\geq2\}$,
and for $R\geq1$ we put $\phi_R(x,t)=\phi(x/R,t/R)$.
Now, setting
$$\widetilde{u}(x,t)=(u\ast\psi_\varepsilon)(x,t)\phi_R(x,t),$$
we see that $\widetilde{u}\in C_0^\infty(\mathbb{R}^n\times\mathbb{R})$ is
supported in the set $H_\varepsilon(\nu_0)=\{(x,t):\langle(x,t),\nu_0\rangle\leq\varepsilon\}$.

Then, applying the Carleman estimate~\eqref{Carl} with $\nu=\nu_0$ to the function $\widetilde{u}$, we also see that
$$\big\|e^{\beta \langle(x,t),\nu_0\rangle}\widetilde{u}\big\|_{L_{t,x}^2(|V|;\,S_\sigma(\nu_0))}\leq
C\|V\|_{\mathcal{F}^p}\big\|e^{\beta\langle(x,t),\nu_0\rangle}(i\partial_t+\Delta)\widetilde{u}\big\|_
{L_{t,x}^2(|V|^{-1};\,H_\varepsilon(\nu_0))}.$$
Hence it follows by Fatou's lemma that
\begin{align}\label{468}
\nonumber\big\|e^{\beta \langle(x,t),\nu_0\rangle}&u\phi_R\big\|_{L_{t,x}^2(|V|;\,S_\sigma(\nu_0))}\\
&\leq
C\|V\|_{\mathcal{F}^p}\lim_{\varepsilon\rightarrow0}
\big\|e^{\beta\langle(x,t),\nu_0\rangle}(i\partial_t+\Delta)\widetilde{u}\big\|_
{L_{t,x}^2(|V|^{-1};\,H_\varepsilon(\nu_0))}.
\end{align}
Now, let us consider the term
\begin{align*}
\big\|e^{\beta\langle(x,t),\nu_0\rangle}(i\partial_t+&\Delta)\widetilde{u}\big\|_
{L_{t,x}^2(|V|^{-1};\,H_\varepsilon(\nu_0))}\\
=&\big\|e^{\beta\langle(x,t),\nu_0\rangle}((i\partial_t+\Delta)u\ast\psi_\varepsilon)\phi_R\big\|_
{L_{t,x}^2(|V|^{-1};\,H_\varepsilon(\nu_0))}\\
&+\big\|e^{\beta\langle(x,t),\nu_0\rangle}(u\ast\psi_\varepsilon)(i\partial_t+\Delta)\phi_R\big\|_
{L_{t,x}^2(|V|^{-1};\,H_\varepsilon(\nu_0))}\\
&+\|e^{\beta\langle(x,t),\nu_0\rangle}\nabla_x(u\ast\psi_\varepsilon)\cdot\nabla_x\phi_R\big\|_
{L_{t,x}^2(|V|^{-1};\,H_\varepsilon(\nu_0))}.
\end{align*}
Also, we note that
\begin{equation}\label{note2}
\|(f\ast g)\chi_{\{|x|,|t|\leq2R\}}\|_{L_{t,x}^2(|V|^{-1})}\leq CR^2\|f\|_{L_{t,x}^2}\|g\|_{L_{t,x}^1}
\end{equation}
if $|V|\geq C/|x|^2$ for $|x|\leq2R$.
When $p<n/2$, since $\delta/|x|^2\in\mathcal{F}^p$ and $|(i\partial_t+\Delta)u|\leq|Vu|\leq|(|V|+\delta/|x|^2)u|$,
we can assume that $|V|\geq\delta/|x|^2$.
Therefore, using ~\eqref{note2} and Lebesgue dominated convergence theorem,
we get
\begin{align*}
\lim_{\varepsilon\rightarrow0}\big\|e^{\beta\langle(x,t),\nu_0\rangle}(i\partial_t+&\Delta)\widetilde{u}\big\|_
{L_{t,x}^2(|V|^{-1};\,H_\varepsilon(\nu_0))}\\
=&\big\|e^{\beta\langle(x,t),\nu_0\rangle}(i\partial_t+\Delta)u\phi_R\big\|_
{L_{t,x}^2(|V|^{-1};\,H_0(\nu_0))}\\
&+\big\|e^{\beta\langle(x,t),\nu_0\rangle}u(i\partial_t+\Delta)\phi_R\big\|_
{L_{t,x}^2(|V|^{-1};\,H_0(\nu_0))}\\
&+\|e^{\beta\langle(x,t),\nu_0\rangle}\nabla_xu\cdot\nabla_x\phi_R\big\|_
{L_{t,x}^2(|V|^{-1};\,H_0(\nu_0))}
\end{align*}
because we are assuming $u\in H_t^1\cap H_x^2$.
Again by letting $R\rightarrow\infty$, we get
\begin{align*}
\lim_{R\rightarrow\infty}\lim_{\varepsilon\rightarrow0}
\big\|e^{\beta\langle(x,t),\nu_0\rangle}(i\partial_t&+\Delta)\widetilde{u}\big\|_
{L_{t,x}^2(|V|^{-1};\,H_\varepsilon(\nu_0))}\\
&=\big\|e^{\beta\langle(x,t),\nu_0\rangle}(i\partial_t+\Delta)u\big\|_
{L_{t,x}^2(|V|^{-1};\,H_0(\nu_0))}.
\end{align*}
Combining this, ~\eqref{468} and~\eqref{Inequality2}, we see that
\begin{align}\label{4567}
\nonumber\big\|e^{\beta \langle(x,t),\nu_0\rangle}u\big\|_{L_{t,x}^2(|V|;\,S_\sigma(\nu_0))}
&\leq C\|V\|_{\mathcal{F}^p}\big\|e^{\beta\langle(x,t),\nu_0\rangle}(i\partial_t+\Delta)u\big\|_
{L_{t,x}^2(|V|^{-1};\,H_0(\nu_0))}\\
\nonumber&\leq C\|V\|_{\mathcal{F}^p}\big\|e^{\beta\langle(x,t),\nu_0\rangle}Vu\big\|_
{L_{t,x}^2(|V|^{-1};\,H_0(\nu_0))}\\
&=C\|V\|_{\mathcal{F}^p}\big\|e^{\beta\langle(x,t),\nu_0\rangle}u\big\|_
{L_{t,x}^2(|V|;\,H_0(\nu_0))}.
\end{align}
Now we split the term
$\|e^{\beta\langle(x,t),\nu_0\rangle}u\|_{L_{t,x}^2(|V|;\,H_0(\nu_0))}$
into two parts
$$\big\|e^{\beta\langle(x,t),\nu_0\rangle}u\big\|_{L_{t,x}^2(|V|;\,S_\sigma(\nu_0))}
+\big\|e^{\beta\langle(x,t),\nu_0\rangle}u\big\|_
{L_{t,x}^2(|V|;\,\{(x,t):\langle(x,t),\nu_0\rangle\leq-\sigma\})}.$$
Since $C\|V\|_{\mathcal{F}^p}<1$, the first term can be absorbed into the left-hand side of
~\eqref{4567}.
Hence, we conclude that
\begin{align*}
\big\|e^{\beta (\langle(x,t),\nu_0\rangle+\sigma)}u\big\|_{L_{t,x}^2(|V|;\,S_\sigma(\nu_0))}
&\leq C\|u\|_{L_{t,x}^2(|V|;\,\{(x,t):\langle(x,t),\nu_0\rangle\leq-\sigma\})}\\
&\leq C\|\nabla_xu\|_{L_{t,x}^2},
\end{align*}
where we used the following Fefferman's inequality (\cite{F})
$$\|f\|_{L^2(|V|)}\leq C\|V\|_{\mathcal{F}^p}\|\nabla f\|_{L^2},\quad p>1.$$
Now, letting $\beta\rightarrow\infty$ implies that $u=0$ in the set $S_\sigma(\nu_0)$.
This completes the proof.
\end{proof}

\begin{proof}[Proof of Corollary~\ref{cor}]
Consider the following integral transformation
\begin{equation}\label{trans}
\widetilde{u}(x,s)=e^{-s^2/2}\int_{-\infty}^\infty e^{-its}e^{-g(t)}u(x,t)dt.
\end{equation}
Since $e^{-g(t)}$ is bounded\,\footnote{\,Recall that $g(t)$ is lower bounded.},
using the condition $u\in H_{t}^{1}\cap H_{x}^{2}$ and Plancherel's theorem,
we easily see that $\widetilde{u}\in H_{s}^{1}\cap H_{x}^{2}$.
Now, by differentiating ~\eqref{trans} it follows that
$$i\partial_s\widetilde{u}+\Delta\widetilde{u}=
e^{-s^2/2}\int_{-\infty}^\infty e^{-its}e^{-g(t)}[(-is+t)u+\Delta u]dt.$$
Note that integrating by parts gives
$$\int_{-\infty}^\infty e^{-its}(-is)e^{-g(t)}udt
=-\int_{-\infty}^\infty e^{-its}e^{-g(t)}(-g'(t)u+\partial_tu)dt$$
because $g(t)\rightarrow\infty$ as $|t|\rightarrow\infty$.
Hence, we see that
\begin{align*}
i\partial_s\widetilde{u}+\Delta\widetilde{u}
&=e^{-s^2/2}\int_{-\infty}^\infty e^{-its}e^{-g(t)}(g'(t)u-\partial_tu+tu+\Delta u)dt\\
&=-V(x)\widetilde{u}.
\end{align*}
Since $u$ vanishes in the half space $D\times\mathbb{R}$, so does $\widetilde{u}$.
Hence Theorem~\ref{thm2} directly implies that $\widetilde{u}\equiv0$.
Then, using Plancherel's theorem in ~\eqref{trans}, we conclude that $u\equiv0$.
This completes the proof.
\end{proof}



\end{document}